\documentclass[12pt,a4paper,reqno]{amsart}

\usepackage[latin1]{inputenc}
\usepackage{amssymb,amsmath,amstext,amsthm,amsfonts, mathtools}
\usepackage{graphicx}
\usepackage{multicol}
\usepackage{subfigure}
\usepackage{tikz}
\usepackage{hyperref}

\setcounter{MaxMatrixCols}{10}

\usepackage{a4wide}
\setlength{\textheight}{22.5cm}

\setlength{\topmargin}{.2in}

\newcommand{\R}{\mathbb{R}}

\newcommand*{\rc}{\mathcal{C}}

\def\inte{\operatorname{int}}

\newtheorem{maintheorem}{Theorem}

\newtheorem{lemma}{Lemma}[section]

\newtheorem{corollary}[lemma]{Corollary}
\newtheorem{proposition}[lemma]{Proposition}

\theoremstyle{remark}
\newtheorem{remark}[lemma]{Remark}

\thanks{JFA was partially funded by Funda\c c\~ao Calouste Gulbenkian, by the
European Regional Development Fund through the program COMPETE and by the Portuguese Government through FCT  under the projects PEst-C/MAT/UI0144/2013 and PTDC/MAT/120346/2010. AP and EV were partially supported by MEC grant  MTM2011-22956, and the Foundation for the promotion in Asturias of the
Scientific  and Technologic Research BP12-123}
\keywords{Piecewise expanding maps, Physical measures, Statistical stability}
\subjclass[2010]{37A05, 37A10, 37C75}



\begin{document}
\title[Statistical stability for piecewise expanding  maps]{Statistical stability for multidimensional\\ piecewise expanding  maps}
\date{\today}

\author[J. F. Alves]{Jos\'{e} F. Alves}
\address{Jos\'{e} F. Alves\\ Centro de Matem\'{a}tica da Universidade do Porto\\ Rua do Campo Alegre 687\\ 4169-007 Porto\\ Portugal}
\email{jfalves@fc.up.pt} \urladdr{http://www.fc.up.pt/cmup/jfalves}

\author[A. Pumari\~no]{Antonio Pumari\~no}
\address{Antonio Pumari\~no\\ Departamento de Matem\'aticas, Facultad de Ciencias de la Universidad de Oviedo, Calvo Sotelo s/n, 33007 Oviedo, Spain.}
\email{apv@uniovi.es} 

\author[E. Vigil]{Enrique Vigil}
\address{Enrique Vigil\\ Departamento de Matem\'aticas, Facultad de Ciencias de la Universidad de Oviedo, Calvo Sotelo s/n, 33007 Oviedo, Spain.}
\email{vigilkike@gmail.com}
\maketitle




\begin{abstract}
We present sufficient conditions for  the (strong) statistical stability of some  classes of multidimensional piecewise expanding maps. As a consequence we get that a certain natural two-dimensional extension of the classical one-dimensional family of tent maps is statistically stable.
\end{abstract}


\tableofcontents

\section{Introduction}
Along this paper we deal with  multidimensional piecewise expanding maps  defined in some compact subset of an Euclidean space. A first approach to this topic has been made in  dimension one by Lasota and Yorke in \cite{LY73}, where they proved the existence of absolutely continuous invariant probability measures for a class of piecewise $C^2$ expanding maps of the interval. The extension to higher dimensions in general is a very delicate question, mostly because of the intricate geometry of the domains of smoothness and their images under iterations.   In the last decades many results have appeared in the literature with several different approaches.  A first  result  in dimension two was obtained by Keller in~\cite{K79}. In the multidimensional case, Gora and Boyarski in \cite{GB89} proved the existence of absolutely continuous invariant probability measures for maps with a finite number of domains of smoothness under some condition of no cusps in the domains of smoothness. This result was later extended by Adl-Zarabi in \cite{A96} to  piecewise expanding maps allowing cusps in the domains of smoothness, and  by Alves in~\cite{A00} to piecewise expanding maps with countably many domains of smoothness. Similar results were drawn in the particular case of  piecewise linear maps by Buzzi and  Tsujii in \cite{B99a,T01b}, and by the same authors in the case of  piecewise real analytic expanding maps of the plane in \cite{B00a,T00}. 
A general result on the existence of absolutely continuous invariant probability measures in any finite dimension was given by Saussol in \cite{S00}  for $C^{1+}$ piecewise expanding maps with infinitely many domains of smoothness under some control on the accumulation  of discontinuities  under iterations of the map.

Many concrete situations lead to the appearance of families of piecewise expanding maps under conditions that guarantee the existence of absolutely continuous invariant probability measures, in many cases this probability measure being unique. It is then a natural question trying to decide weather these measures depend continuously on the dynamics, i.e the statistical stability of those maps. This question was addressed  in \cite{AV02} for certain robust classes of maps with non-uniform expansion; see also \cite{A04}.
In \cite{PRT14}, the authors consider a one-parameter family $(\Lambda_t)_t$ of two-dimensional piecewise linear maps defined on a triangle in $\R^2$. This family $(\Lambda_t)_t$ is closely related to the family of limit return maps arising when certain three-dimensional homoclinic bifurcations take place; see \cite{T01}.
It is showed in \cite[Proposition 6.1]{PRT14} and
the comments following  it that $\Lambda_t$ becomes the best choice in the piecewise linear setting for describing the dynamics of the original limit return maps
given in \cite{T01} when the unstable manifold of the saddle point has dimension two.

The results in  the present  paper have a twofold aim:
to give sufficient conditions for the statistical stability of some general classes of multidimensional piecewise expanding maps; and to prove the statistical stability of the family of maps  introduced in \cite{PRT14}. This last result will be obtained as an application of our general result.

\subsection{Statistical stability}
In this work we consider discrete-time dynamical systems defined  in a compact region $R\subset \R^d$, for some $d\ge1$. Given  a measurable map $\phi:R\to R$, we say that a  probability measure $\mu$ on the Borel sets of $R$ is  \emph{$\phi$-invariant} if $\mu(\phi^{-1}(A))=\mu(A)$, for any Borel set $A\subset R$. If $\mu(A)=0$ whenever $m(A)=0$, where $m$ denotes the Lebesgue measure on the Borel sets of $\R^d$, then $\mu$ is called \emph{absolutely continuous}. In this case, there exists an $m$-integrable  function $h\ge 0$, usually denoted $d\mu/dm$ and  called the density of $\mu$ with respect to $m$,  such that for any Borel set $A\subset R$ we have
 $\mu(A)=\int_Ahdm$. A $\phi$-invariant probability measure $\mu$ is called ergodic if $\mu(A)\mu(R\setminus A)=0$, whenever $\phi^{-1}(A)=A$. As a consequence of Birkhoff's Ergodic Theorem we have that any $\phi$-invariant absolutely continuous ergodic probability measure $\mu$ is a \emph{physical measure}, meaning that for a subset of points $x\in R$ with positive Lebesgue measure we have
 $$\lim_{n\to\infty}\sum_{j=0}^{n-1}f(\phi^j(x))=\int fd\mu$$
 for all continuous $f:R\to \R$.

Let $I$ be a metric space and $(\phi_t)_{t\in I}$ a family of maps $\phi_t:R\rightarrow R$  such that each $\phi_t$ has some absolutely continuous $\phi_t$-invariant probability measure. We say that  the family $(\phi_{t})_{t\in I}$ is \emph{statistically stable} if for any $t_0\in I$, any choice of a sequence $(t_n)_n$ in $I$ converging to $t_0$ and any  choice of  a sequence of absolutely continuous $\phi_{t_n}$-invariant probability measures $(\mu_{t_n})_n$, then any accumulation point of   the sequence of  densities $d\mu_{t_n}/dm$ must  converge in the $L^1$-norm to the density of an absolutely continuous $\phi_{t_0}$-invariant probability measure. Of course, when each $\phi_t$ has a unique absolutely continuous invariant probability measure~$\mu_t$, then  statistical stability means that  $d\mu_t/dm$ converges in the $L^1$-norm to $d\mu_{t_0}/dm$ when~$t\to t_0$. A strictly weaker notion of statistical stability may be given if we assume only weak* convergence of the measures $\mu_t$ to $\mu_{t_0}$ when $t\to t_0$.

\subsection{Piecewise expanding maps}

Here we state precisely sufficient
conditions for the statistical stability of  certain higher dimensional families of $C^2$
piecewise expanding maps with countably many domains of
smoothness. We follow the approach  in~\cite{A00} which, on its turn, was inspired in \cite{GB89}. 

Let $R$ be a compact set in $\R^d$, for some $d\ge 1$. For each $1\le p\le \infty$ we denote by  $L^p(R)$ the Banach space of functions in $L^p(m)$ with support contained in $R$, endowed with the usual norm $\|\quad\|_p$.  
Let $\phi: R\to R$ a map for which there is a (Lebesgue mod 0) partition
$\{R_i\}_{i=1}^{\infty}$ of $R$ such that each $R_i$ is a
closed domain with piecewise $C^2$ boundary of finite
$(d-1)$-dimensional measure and 
$\phi_i=\phi|R_i$ is a $C^2$ bijection from $\inte(R_i)$, the interior of~$R_i$, onto its image with a $C^2$ extension to $R_i$. We say that $\phi$ is  {\it piecewise
expanding} if
\begin{itemize}
\item[(P$_1$)] there is $0<\sigma<1$ such that for every $i\geq 1$ and $x\in\inte(\phi(R_i))$
$$\| D\phi_i^{-1}(x)\|
<\sigma.$$ 
 \end{itemize}
We say that  $\phi$ has {\it
bounded distortion} if
\begin{itemize} \item[(P$_2$)] there
is $D\ge 0$ such that for every $i\geq 1$ and $x\in\inte(\phi(R_i))$ 
$$\frac{\left\|
D\left(J\circ\phi^{-1}_i\right)(x)\right\|}{\left|J\circ\phi^{-1}_i\right(x)|}\le D,$$
where $J$ is the Jacobian of $\phi$.
 \end{itemize}
Finally, we say that $\phi$ has \emph{long branches} 
 if 
\begin{enumerate}
 \item[(P$_3$)]   there are $\beta,\rho>0$ and for each $i\ge 1$ there is a $C^1$
unitary vector field $X_i$ in $\partial \phi(R_i)$ such that:
\begin{enumerate}
\item[(a)]   the segments joining each $x\in\partial \phi(R_i)$ to
$x+\rho X_i(x)$ are pairwise disjoint and contained in $\phi(R_i)$, and their union forms a neighborhood of $\partial \phi(R_i)$ in $\phi(R_i)$.
\item[(b)] for every $x\in\partial \phi(R_i)$
and $v\in T_x\partial \phi(R_i)\setminus\{0\}$ the angle $\angle(v,X_i(x))$ between $v$ and $X_i(x)$ 
satisfies $|\sin\angle(v,X_i(x))|\geq
\beta$. 
\end{enumerate}
 \end{enumerate}
Here we assume that at the singular points $x\in\partial \phi(R_i)$
where $\partial \phi(R_i)$ is not smooth the vector $X_i(x)$ is a
common $C^1$ extension of $X_i$ restricted to each
$(d-1)$-dimensional smooth component of $\partial \phi(R_i)$ having
$x$ in its boundary. We also assume that the tangent space
of any such singular point $x$ is the union of the tangent
spaces to the  $(d-1)$-dimensional smooth components it
belongs to. 

In the one-dimensional case $d=1$, condition (P$_3$)(a)  is clearly satisfied once we take the sets in the partition of $R$ as being intervals whose images $\phi(R_i)$ have sizes uniformly bounded away from zero. Additionally, condition~(P$_3$)(a) 
always  holds in dimension one, since $ \phi(R_i)$ is a 0-dimensional manifold and so $T_x\partial \phi(R_i)=\{0\}$ for any $x\in \phi(R_i)$. In this case we can even take the optimal value $\beta=1$; see Remark~\ref{re.beta}.

\begin{maintheorem}\label{expanding} Let $I$ be a metric space and $(\phi_t)_{t\in I}$ a family of $C^2$
 piecewise expanding maps $\phi_t:R\to R$ with bounded distortion and
long branches. Assume that for each $t\in I$
\begin{enumerate}
\item for each continuous $f:R\to\R$ we have $\|f\circ \phi_{t'}-f\circ\phi_t\|_d\to0$ when $t'\to t$;
\item there exist $0<\lambda<1$ and $K>0$ for which $$\sigma_t\left(1+\frac1{\beta_t}\right)\le \lambda\quad\text{and}\quad D_t+\frac{1}{\beta_t\rho_t}+\frac{D_t}{\beta_t}  \le K,$$ 
where $\sigma_t, D_t,\beta_t,\rho_t$ are constants for which (P$_1$), (P$_2$) and (P$_3$) hold for $\phi_t$.
\end{enumerate}
 
Then
$(\phi_t)_{t\in I}$ is  statistically stable.

 \end{maintheorem}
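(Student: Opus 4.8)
The plan is to reduce statistical stability to two things that fit together through the hypotheses: a Lasota--Yorke inequality that holds \emph{uniformly} in $t$, and a duality argument that lets one pass to the limit in the fixed-point equation. Throughout I denote by $\mathcal{L}_t$ the transfer (Perron--Frobenius) operator of $\phi_t$, determined by the duality relation $\int(\mathcal{L}_t g)\,f\,dm=\int g\,(f\circ\phi_t)\,dm$ for $g\in L^1(R)$ and $f$ bounded, and by $V$ the multidimensional variation seminorm used in the approach of \cite{GB89,A00}, with $BV(R)$ the associated space of functions of bounded variation on $R$. The key observation is that a density $h_t=d\mu_t/dm$ of an absolutely continuous $\phi_t$-invariant probability measure is exactly a nonnegative fixed point $\mathcal{L}_t h_t=h_t$ with $\|h_t\|_1=1$, so statistical stability amounts to showing that $L^1$-accumulation points of such fixed points, along $t_n\to t_0$, are fixed points of $\mathcal{L}_{t_0}$.

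First I would record a uniform Lasota--Yorke inequality. The single-map estimates of \cite{A00} (following \cite{GB89}), built from the constants $\sigma_t,D_t,\beta_t,\rho_t$, yield an inequality of the form
$$V(\mathcal{L}_t g)\le \sigma_t\Bigl(1+\frac{1}{\beta_t}\Bigr)V(g)+\Bigl(D_t+\frac{1}{\beta_t\rho_t}+\frac{D_t}{\beta_t}\Bigr)\|g\|_1.$$
Hypothesis (2) is tailored precisely so that the contraction coefficient is at most $\lambda<1$ and the additive coefficient at most $K$, uniformly in $t$. Applying this to a fixed point $h_t$, where $\mathcal{L}_t h_t=h_t$ and $\|h_t\|_1=1$, gives $V(h_t)\le\lambda V(h_t)+K$, whence the uniform bound $V(h_t)\le K/(1-\lambda)$ over all $t\in I$ and all invariant densities.

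With this uniform variation bound, the compact embedding of $BV(R)$ into $L^1(R)$ (a Helly/Rellich-type compactness) ensures that any sequence $(h_{t_n})_n$ of invariant densities has $L^1$-accumulation points; let $h$ be one, say $h_{t_n}\to h$ in $L^1$ with $t_n\to t_0$, so that $h\ge0$ and $\|h\|_1=1$. To see that $h$ is $\phi_{t_0}$-invariant I would test against a continuous $f:R\to\R$ and use the fixed-point relations to write
$$\int (\mathcal{L}_{t_0}h)\,f\,dm-\int h_{t_n}f\,dm=\int h\,(f\circ\phi_{t_0})\,dm-\int h_{t_n}\,(f\circ\phi_{t_n})\,dm,$$
and then decompose the right-hand side as
$$\int (h-h_{t_n})\,(f\circ\phi_{t_0})\,dm+\int h_{t_n}\,(f\circ\phi_{t_0}-f\circ\phi_{t_n})\,dm.$$
The first integral tends to $0$ because $h_{t_n}\to h$ in $L^1$ and $f\circ\phi_{t_0}$ is bounded. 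For the second I would apply H\"older's inequality with the conjugate exponents $d/(d-1)$ and $d$, bounding it by $\|h_{t_n}\|_{d/(d-1)}\,\|f\circ\phi_{t_0}-f\circ\phi_{t_n}\|_d$.

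The crux of the matter, and the step I expect to be the main obstacle, is the uniform control of $\|h_{t_n}\|_{d/(d-1)}$; here the Sobolev-type embedding $BV(R)\hookrightarrow L^{d/(d-1)}(R)$ is decisive, converting the uniform variation bound into a uniform bound $\|h_{t_n}\|_{d/(d-1)}\le C$, and its exponent matches exactly the $\|\cdot\|_d$ appearing in hypothesis (1). Hypothesis (1), applied with $t'=t_n\to t_0$, then forces $\|f\circ\phi_{t_0}-f\circ\phi_{t_n}\|_d\to0$, so the second integral also vanishes in the limit. Hence $\int h_{t_n}f\,dm\to\int(\mathcal{L}_{t_0}h)\,f\,dm$, while $\int h_{t_n}f\,dm\to\int hf\,dm$ by $L^1$-convergence; since $f$ was an arbitrary continuous function this gives $\mathcal{L}_{t_0}h=h$, so $h$ is an invariant density of $\phi_{t_0}$, which is statistical stability. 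The genuine difficulty circumvented here is that the operators $\mathcal{L}_{t_n}$ cannot be compared directly, since their discontinuity sets move with $t$; the argument avoids this altogether by passing to the weak/dual formulation and balancing the $L^1$-convergence of the densities against the $L^d$-convergence of the composed test functions through the matching Sobolev exponent $d/(d-1)$.
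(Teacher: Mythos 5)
Your overall strategy is the one the paper follows: a Lasota--Yorke inequality whose coefficients are controlled uniformly in $t$ by hypothesis (2), a uniform variation bound on invariant densities giving $L^1$-compactness, and a duality argument in which the term $\int h_{t_n}(f\circ\phi_{t_0}-f\circ\phi_{t_n})\,dm$ is handled by H\"older with exponents $d/(d-1)$ and $d$, the Sobolev embedding $BV(R)\hookrightarrow L^{d/(d-1)}(R)$, and hypothesis (1). That final limiting argument is essentially identical to the paper's Lemma~\ref{p.igual}.

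There is, however, one genuine gap: the step where you apply the Lasota--Yorke inequality directly to a fixed point to get $V(h_t)\le\lambda V(h_t)+K$ and hence $V(h_t)\le K/(1-\lambda)$. This presupposes $V(h_t)<+\infty$. A priori an invariant density is only an $L^1$ function, and if $V(h_t)=+\infty$ the inequality reads $+\infty\le+\infty$ and yields nothing; you cannot subtract $\lambda V(h_t)$ from both sides. This is exactly why the paper does not argue on the fixed point directly but instead proves Proposition~\ref{pr.dakhc} and Corollary~\ref{co.bueno}: one approximates $f\in L^1(R)$ by functions $f_k\in BV(R)$ (property (B$_1$)), applies the iterated inequality $V(P_t^jf_k)\le\lambda^jV(f_k)+K_1\|f_k\|_1$ to the Ces\`aro averages, and uses the compactness property (B$_3$) together with the lower semicontinuity of the variation (B$_2$) to conclude that accumulation points of $\frac1n\sum_{j=0}^{n-1}P_t^jf$ lie in $BV(R)$ with variation at most $4K_1\|f\|_1$; since the Ces\`aro averages of an invariant density are constant, this gives $h_t\in BV(R)$ and $V(h_t)\le 4K_1$. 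Your argument needs this (or an equivalent regularity statement for invariant densities) inserted before the uniform bound can be asserted; once that is done, the rest of your proof goes through as written.
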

It follows from  \cite[Section 5]{A00} that under the assumptions above each $\phi_t$ has
a finite number of ergodic absolutely continuous invariant probability measures.
The proof of this result uses the space of functions of bounded variation in $\R^d$, which are known to belong to the space $L^p(R)$, with $p=d/(d-1)$; see~\eqref{bvp} below. Observing that $1/p+1/d=1$, this makes the choice of the norm $\|\quad\|_d$ in condition (1) less mysterious; see the proof of Lemma~\ref{p.igual}.
Notice that condition (1) in Theorem~\ref{expanding}  holds whenever the maps $\phi_t$ are continuous and $\phi_t$ depends continuously  (in the $C^0$-norm) on $t\in I$.

\subsection{Two-dimensional tent maps} 

Here we present  the family of maps introduced in
  \cite{PRT14} and give some result on its statistical stability. We define the family of maps $\Lambda_{t}:T\to T$ on the triangle ${T}={T}_0\cup {T}_1$, where
\begin{equation}\label{algo}
{T}_0=\{(x,y):0 \leq x \leq 1, \ 0 \leq y \leq x \},\quad
{T}_1=\{(x,y):1 \leq x \leq 2, \ 0 \leq y \leq 2-x \},
\end{equation}
and
\begin{equation}\label{familyt}
\Lambda_t(x,y)= \left\{
\begin{array}{ll}
(t(x+y),t(x-y)), & \mbox{if }    (x,y)\in {T}_0;\\
(t(2-x+y),t(2-x-y)), & \mbox{if } (x,y)\in {T}_1.%
\end{array}
\right.
\end{equation}
The domains $T_0$ and $T_1$ are separated by a straight line segment $\mathcal C=\{(x_1,x_2)\in T: x_1=1\}$ that we call the \emph{critical set} of $\Lambda_t$.

As shown in \cite{PT06}, the map $\Lambda_1$ displays the same properties of the one-dimensional tent map $\lambda_2(x)=1-2|x|$. Among them, the consecutive
pre-images $\{\Lambda_1^{-n}(\rc)\}_{n\in \mathbb{N}}$ of the critical line $\rc$ define a sequence of partitions (whose diameter tends to zero as $n$ goes to infinity)
of $T$ leading them to conjugate $\Lambda_1$ to a one sided shift with two symbols. Hence, it easily follows that
$\Lambda_1$ is transitive in $T$. Furthermore, for every point $(x_0,y_0)\in T$ whose orbit never hits the
critical line the Lyapounov exponent of $\Lambda_1$ along the orbit of $(x_0,y_0)$ is positive (and coincides with $\frac{1}{2}\log{2}$) in all nonzero direction.
Finally, it can be constructed an absolutely continuous ergodic invariant probability measure for $\Lambda_1$; see  \cite{PT06}. Because of this, $\Lambda_1$ was called  the \textit{two-dimensional tent map}. Since the parameter $t$ in (\ref{familyt}) essentially gives the rate of expansion for $\Lambda_t$ (playing the same roll of the parameter $a$ for $\lambda_a(x)=1-a|x|$), the family $(\Lambda_t)_t$ can be considered as a natural extension of the one-dimensional family of tent maps and naturally called a  \textit{family of two-dimensional tent maps}.

The results obtained in \cite{PT06} for  $t=1$ were extended to a larger set of parameters. More precisely,  it was proved  in \cite{PRT14a}  that
for each $t \in [\tau,1] $, with $ \tau=\frac{1}{\sqrt{2}}(\sqrt{2}+1)^\frac{1}{4}\approx 0.882,$ the map $\Lambda_t$  exhibits a \emph{strange attractor} $A_t\subset T$:  $\Lambda_t$ is (strongly) transitive in $A_t$, the periodic orbits are dense in $A_t$, and  there exists a dense orbit in $A_t$ with two positive Lyapunov exponents. Furthermore,  $A_t$ supports a unique absolutely continuous $\Lambda_t$-invariant  ergodic probability measure~$\mu_t$.  
As an application  of Theorem~\ref{expanding} we shall obtain the following result.


\begin{maintheorem}\label{main} The family $(\Lambda_t)_{t\in [\tau,1]}$ is statistically stable.
\end{maintheorem}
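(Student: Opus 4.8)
The plan is to deduce Theorem~\ref{main} from Theorem~\ref{expanding}, applied not to the maps $\Lambda_t$ themselves but to a fixed iterate $\Lambda_t^N$. On each of the two pieces $T_0,T_1$ the map $\Lambda_t$ is affine, and a direct computation shows that its derivative equals $t\sqrt2$ times an orthogonal matrix; thus $\Lambda_t$ is \emph{conformal} on each piece, with both singular values equal to $t\sqrt2$. Consequently (P$_1$) holds with $\sigma_t=(t\sqrt2)^{-1}$, which is $<1$ precisely because $\tau>1/\sqrt2$, and (P$_2$) holds with $D_t=0$ since the Jacobian is constant on each piece. However, $\sigma_t\ge 1/\sqrt2$ together with the a priori bound $\beta_t\le1$ forces $\sigma_t(1+1/\beta_t)\ge\sqrt2>1$, so condition~(2) of Theorem~\ref{expanding} can never hold for $\Lambda_t$ itself; this is what compels us to pass to an iterate.

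For $\Lambda_t^N$ the situation improves. Being a composition of conformal branches, $\Lambda_t^N$ is again piecewise affine and conformal, with $\sigma_t^{(N)}=(t\sqrt2)^{-N}\to0$ as $N\to\infty$ and $D_t^{(N)}=0$. The pieces of the generating partition $\bigvee_{k=0}^{N-1}\Lambda_t^{-k}\{T_0,T_1\}$ are polygons bounded by preimages of $\partial T$ and of the critical set $\rc$; since the linear parts of the two branches generate a finite subgroup of $O(2)$ (the reflection and rotation involved have orders $2$ and $8$), all edge directions---hence all corner angles---lie in a finite set that does not depend on $t$ or $N$. Because the branches are conformal, the corner angles of the image pieces $\Lambda_t^N(R_i)$ coincide with those of the domain pieces, so (P$_3$)(b) holds with a constant $\beta_*>0$ uniform in $t$ and $N$; for a fixed $N$ the finitely many pieces vary continuously over the compact parameter interval and do not degenerate, yielding (P$_3$)(a) with some $\rho_*>0$ uniform in $t$. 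Fixing $N$ so large that $\sigma_t^{(N)}(1+1/\beta_*)\le\lambda<1$ for all $t\in[\tau,1]$ then gives condition~(2) of Theorem~\ref{expanding} for the family $(\Lambda_t^N)$, with $K=1/(\beta_*\rho_*)$.

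It remains to organize the convergence. Condition~(1) of Theorem~\ref{expanding} holds for the original family $(\Lambda_t)$: for fixed $(x,y)$ the branch is determined by the $t$-independent partition $\{T_0,T_1\}$, so $\Lambda_{t'}(x,y)\to\Lambda_t(x,y)$ and, by continuity of $f$ and dominated convergence, $\|f\circ\Lambda_{t'}-f\circ\Lambda_t\|_2\to0$. Now any absolutely continuous $\Lambda_t$-invariant density is a fixed point of the transfer operator $\mathcal{L}_t$, hence of $\mathcal{L}_t^N=\mathcal{L}_{\Lambda_t^N}$; the uniform Lasota--Yorke inequality underlying \cite{A00}, valid for $(\Lambda_t^N)$ under the bounds just verified, bounds the bounded-variation norm of these densities uniformly in $t$. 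Since functions of bounded variation embed compactly in $L^1$ and are uniformly bounded in $L^2$ (here $p=d/(d-1)=2$), any sequence of such densities $h_n$ with $t_n\to t_0$ has an $L^1$-convergent subsequence $h_n\to h$. Passing to the limit in the duality identity $\int h_n f\,dm=\int h_n\,(f\circ\Lambda_{t_n})\,dm$, using condition~(1) together with H\"older's inequality and the uniform $L^2$ bound to control $\int h_n(f\circ\Lambda_{t_n}-f\circ\Lambda_{t_0})\,dm$, yields $\mathcal{L}_{t_0}h=h$, so $h$ is the density of an absolutely continuous $\Lambda_{t_0}$-invariant measure. This is exactly statistical stability.

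The main obstacle I anticipate is the geometric step in the second paragraph: verifying the long-branch condition~(P$_3$) for the iterate with constants uniform in $N$ and $t$. The angle control is pinned down cleanly by conformality and the finiteness of the group of linear parts, but one must still rule out degeneration of the (increasingly many) pieces as $N$ grows and across the parameter range; here the robust Markov-like structure of the two-dimensional tent maps established in \cite{PT06,PRT14a} is what makes the uniform bounds available.
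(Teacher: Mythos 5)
Your proposal is correct and follows essentially the same route as the paper: deduce Theorem~\ref{main} from Theorem~\ref{expanding} applied to an iterate of $\Lambda_t$, using piecewise linearity (conformality) to get (P$_1$) with $D_t=0$ for (P$_2$), the finitely many edge directions of the polygonal smoothness domains for a uniform $\beta$, and continuity in $t$ over the compact parameter interval for a uniform $\rho$. The only cosmetic differences are that the paper fixes the third iterate $\Lambda_t^3$ explicitly and invokes uniqueness of the absolutely continuous invariant measure for powers of $\Lambda_t$ (via strong transitivity) to pass back from the iterate, whereas you keep $N$ large but unspecified and instead pass to the limit directly in the $\Lambda_{t_n}$-invariance identity.
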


As each $\Lambda_t$ has a unique absolutely continuous invariant probability measure $\mu_t$, the statistical stability means in this case  that  $d\mu_t/dm$ converges in the $L^1$-norm to $d\mu_{t_0}/dm$ when~$t\to t_0$, for each $t_0\in[\tau,1]$.

\section{Functions of bounded variation}\label{variation}

The main ingredient for the proof of Theorem~\ref{expanding} is the
notion of variation for functions in multidimensional spaces. We
adopt the definition given in \cite{G84}. Given $f\in
L^1(\R^d)$ with compact support
 we define the {\it variation} of $f$ as
$$V(f)=\sup\left\{\int_{\R^n}f\mbox{div}(g)dm\,:\,g\in
C_0^1(\R^d,\R^d)\text{ and }  \|g\|\leq 1\right\},$$ where
$C_0^1(\R^d,\R^d)$ is the set of $C^1$ functions from
$\R^d$ to $\R^d$ with compact support, $\mbox{div}(g)$ is the divergence of $g$  and $\|\quad\|$ is
the sup norm in $C_0^1(\R^d,\R^d)$. 
%
%
Given a bounded set $R\subset\R^d$ we consider the space of {\em bounded variation} functions in
$L^1(R)$
$$BV(R)=\left\{ f\in L^1(R):V(f)<+\infty\right\}.$$ 
Contrarily to the classical one-dimensional definition of bounded variation,  a multidimensional bounded variation function  need not to be
bounded; see  \cite{GB92}. However, by  Sobolev's Inequality
(see e.g. \cite[Theorem 1.28]{G84}) there is some constant $C>0$ (only
depending on the dimension $d$) such that for any $f\in
BV(R)$
 \begin{equation}\label{bvp}
 \left(\int|f|^pdm_d\right)^{1/p}\leq C\;V(f),
\quad \mbox{with}\quad p=\frac{d}{d-1}.
\end{equation}
This in
particular gives $BV(R)\subset L^p(R)$. 
We shall use the following properties of bounded variation functions whose proofs 
 may be
found in  \cite{EG92} or \cite{G84}:
 \begin{itemize}
\item[(B$_1$)] $BV(R)$ is dense in $L^1(R)$;
 \item[(B$_2$)]
if $(f_k)_{k}$ is a sequence in $BV(R)$
converging to $f$ in the $L^1$-norm, then $V(f)\leq
\liminf_k V(f_k)$;
\item[(B$_3$)] if $(f_k)_k$ is a sequence
in $BV(R)$ such that $\big(\|f_k\|_1\big)_k$ and
$\big(V(f_k)\big)_k$ are bounded, then $(f_k)_k$
has some subsequence converging in the $L^1$-norm to a
function in $BV(R)$. \end{itemize}

\section{Piecewise expanding
maps}\label{multidimensional}
In this section we prove Theorem~\ref{expanding}.
Let $\{R_i^t\}_{i=1}^\infty$ be  the  domains of smoothness of   $\phi_t$ with $t\in I$ satisfying  the assumptions of Theorem~\ref{expanding} and define $\phi_{t,i}=\phi_t\vert_{R_i^t }$ for all $i\ge 1$. For each $t\in I$ we consider  the  \emph{Perron-Frobenius operator}
$$P_t:L^1(R)\longrightarrow L^1(R)$$ defined for $f\in L^1(R)$ as 
$$P_t
f=\sum_{i=1}^{\infty}\frac{f\circ\phi_{t,i}^{-1}}{|J\circ\phi_{t,i}^{-1}|}
\chi_{\phi_t(R_i^t)}.$$
 It is well known that the
following two properties hold for each $P_t$.

\begin{itemize} \item[(C$_1$)] $\|P_t f\|_1\leq \| f\|_1$ for
every $f\in L^1(R)$; \item[(C$_2$)] $P_t f=f$ if and only if
$f$ is the density of an absolutely continuous $\phi_t$-invariant
probability measure. \end{itemize}

Considering  $0<\lambda<1$ and $K>0$ as in the statement of Theorem~\ref{expanding}, the proof of the next lemma follows immediately from~\cite[Lemma~5.4 \& Lemma~5.5]{A00} with 
$$K_1=K\sum_{j=0}^\infty
\lambda^j.$$

\begin{lemma} \label{lema3} Given $t\in I$ and $j\geq 1$ we have for each $f\in BV(R)$ 
$$V(P_t^jf)\leq \lambda^jV(f)+K_1\|f\|_1.$$ 
\end{lemma}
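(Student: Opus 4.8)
The plan is to reduce everything to a single \emph{one-step} Lasota--Yorke inequality for the Perron--Frobenius operator and then iterate. Concretely, the crux is to establish that for every $f\in BV(R)$
\[V(P_t f)\le \lambda V(f)+K\|f\|_1,\]
with $\lambda$ and $K$ the uniform constants supplied by hypothesis (2) of Theorem~\ref{expanding}; the statement with general exponent $j$ then follows by a short induction.

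First I would recall the anatomy of the one-step estimate for a single piecewise expanding map, which is exactly the content of \cite[Lemma~5.4]{A00}. Writing $P_t f=\sum_i (f\circ\phi_{t,i}^{-1}/|J\circ\phi_{t,i}^{-1}|)\chi_{\phi_t(R_i^t)}$ and estimating $V(P_tf)$ through the defining supremum over test fields $g\in C_0^1(\R^d,\R^d)$, one separates the variation into a contribution from the smooth interior of each image $\phi_t(R_i^t)$ and a contribution concentrated on the boundaries $\partial\phi_t(R_i^t)$, where the branches are cut. For the interior part, pulling the test field back by $\phi_{t,i}^{-1}$ contracts lengths by the factor $\sigma_t$ of (P$_1$), while (P$_2$) keeps the derivative of the Jacobian weight under control, yielding a term bounded by $\sigma_t V(f)+D_t\|f\|_1$. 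For the boundary part, the long-branches condition (P$_3$) is what makes the estimate manageable: the uniform transversality $|\sin\angle(v,X_i(x))|\ge\beta_t$ together with the collar of width $\rho_t$ furnished by $X_i$ lets one bound the surface integral over $\partial\phi_t(R_i^t)$ by interior quantities, at the cost of the factors $1/\beta_t$ and $1/\rho_t$. Assembling the two contributions produces precisely the coefficient $\sigma_t(1+1/\beta_t)$ in front of $V(f)$ and the constant $D_t+1/(\beta_t\rho_t)+D_t/\beta_t$ in front of $\|f\|_1$, so that by hypothesis (2) the former is at most $\lambda<1$ and the latter at most $K$, \emph{uniformly} in $t\in I$.

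Granting the one-step inequality, I would obtain the $j$-step bound by induction on $j$. Using property (C$_1$), namely $\|P_t^{j-1}f\|_1\le\|f\|_1$, and applying the one-step inequality to $P_t^{j-1}f$,
\[V(P_t^jf)=V\bigl(P_t(P_t^{j-1}f)\bigr)\le \lambda V(P_t^{j-1}f)+K\|P_t^{j-1}f\|_1\le \lambda V(P_t^{j-1}f)+K\|f\|_1.\]
Unwinding this recursion gives
\[V(P_t^jf)\le \lambda^jV(f)+K\bigl(1+\lambda+\cdots+\lambda^{j-1}\bigr)\|f\|_1\le \lambda^jV(f)+K\Bigl(\sum_{k=0}^\infty\lambda^k\Bigr)\|f\|_1,\]
which is exactly the claim with $K_1=K\sum_{k=0}^\infty\lambda^k$; this is the iteration step recorded in \cite[Lemma~5.5]{A00}.

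The only genuine difficulty lies in the one-step inequality, and within it the boundary term, where the long-branches geometry is decisive; everything else is bookkeeping. Since the paper cites \cite[Lemma~5.4~\&~Lemma~5.5]{A00}, I may take those estimates as given. The one point requiring care is the \emph{matching of constants}: one must verify that the coefficients appearing in \cite{A00} are exactly $\sigma_t(1+1/\beta_t)$ and $D_t+1/(\beta_t\rho_t)+D_t/\beta_t$, so that hypothesis (2) forces them below $\lambda$ and $K$ simultaneously for every $t$. This uniformity in $t$ — not merely the inequality for a single fixed $t$ — is what the statement really buys, and it is precisely what the later statistical-stability argument will need.
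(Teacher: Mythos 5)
Your proposal is correct and follows essentially the same route as the paper, which simply invokes \cite[Lemma~5.4 \& Lemma~5.5]{A00}: the one-step Lasota--Yorke inequality with coefficients $\sigma_t(1+1/\beta_t)\le\lambda$ and $D_t+1/(\beta_t\rho_t)+D_t/\beta_t\le K$, followed by iteration via (C$_1$) to yield $K_1=K\sum_{k=0}^\infty\lambda^k$. Your additional emphasis on the uniformity of the constants in $t$ is exactly the point of hypothesis (2) and is consistent with how the paper uses the lemma later.
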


\begin{remark}\label{re.beta}
The proof of  \cite[Lemma~5.4]{A00} uses \cite[Lemma 3]{GB89} applied to the sets $S=\phi(R_i)$, which gives for a function $f\in C^1(S)$,  
\begin{equation}\label{eq.varia}
\int_{\partial
S}|f|dm\leq 
\frac{1}{\beta}\left(\frac{1}{\rho}\int_{S}|f|dm+
\int_{S}\|Df\|dm\right).
\end{equation}
In the one-dimensional case we have for any interval $S$ and $x\in S$
$$f(x)\leq 
\frac{1}{|S|}\int_{S}|f|dm+
\int_{S}|Df|dm,$$
which yields  a formula similar to \eqref{eq.varia} in the one-dimensional case with $\beta =1$.
\end{remark}

In the proof of the result below we
follow some standard arguments with functions of bounded variation, namely those used    in
\cite{LY73} for the one-dimensional case. 

\begin{proposition} \label{pr.dakhc}Given $t\in I$ and
$f\in L^1(R)$ the sequence $1/n\sum_{j=0}^{n-1}P^j_t f$ has some accumulation point in the $L^1$-norm. Moreover, any such accumulation  point   belongs to $BV(R)$ and has variation bounded by $ 4K_1\|f\|_1$.
\end{proposition}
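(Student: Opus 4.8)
The plan is to follow the classical Lasota--Yorke argument adapted to the multidimensional bounded variation setting, using Lemma~\ref{lema3} as the key uniform estimate. First I would reduce to the case $f\in BV(R)$. Since $BV(R)$ is dense in $L^1(R)$ by property (B$_1$), and each operator $P_t$ is an $L^1$-contraction by (C$_1$), the averages $A_n f=\frac1n\sum_{j=0}^{n-1}P_t^j f$ depend continuously on $f$ in the $L^1$-norm, uniformly in $n$. This lets one approximate an arbitrary $f\in L^1(R)$ by a $BV$ function and pass the conclusion to the limit, so it suffices to establish the statement for $f\in BV(R)$.

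Assume now $f\in BV(R)$. The heart of the matter is to bound the variation of the averages $A_n f$ uniformly in $n$. Applying Lemma~\ref{lema3} to each iterate $P_t^j f$ and using subadditivity of the variation together with the fact that $V(P_t^0 f)=V(f)$, I would estimate
\begin{equation*}
V(A_n f)\le \frac1n\sum_{j=0}^{n-1}V(P_t^j f)\le \frac1n\sum_{j=0}^{n-1}\left(\lambda^j V(f)+K_1\|f\|_1\right).
\end{equation*}
The geometric sum $\sum_{j=0}^{n-1}\lambda^j$ is bounded by $1/(1-\lambda)$, so the first term contributes at most $\frac{1}{n(1-\lambda)}V(f)$, which tends to $0$, while the second term contributes exactly $K_1\|f\|_1$. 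Hence $\limsup_n V(A_n f)\le K_1\|f\|_1$, and in particular the sequence $\big(V(A_n f)\big)_n$ is bounded. Simultaneously, $\|A_n f\|_1\le \|f\|_1$ by the $L^1$-contraction property (C$_1$), so the $L^1$-norms are bounded as well.

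With both $\big(\|A_n f\|_1\big)_n$ and $\big(V(A_n f)\big)_n$ bounded, the compactness property (B$_3$) furnishes a subsequence of $(A_n f)_n$ converging in the $L^1$-norm to some $g\in BV(R)$, which establishes the existence of an accumulation point. For the variation bound on such an accumulation point, I would invoke the lower semicontinuity property (B$_2$): if $A_{n_k}f\to g$ in $L^1$, then $V(g)\le\liminf_k V(A_{n_k}f)$. The main subtlety to handle carefully is that the crude bound from above gives $K_1\|f\|_1$ rather than the claimed $4K_1\|f\|_1$; the extra factor is precisely the slack that makes the argument robust when passing from the dense $BV$ case back to a general $f\in L^1(R)$ via approximation, where one pays with approximation errors in both the $L^1$-norm and the variation. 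This approximation step, reconciling the $BV$-variation estimate for the approximants with the $L^1$-limit through (B$_2$), is the step I expect to require the most care, and it is where the generous constant $4K_1$ earns its keep.
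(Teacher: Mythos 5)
Your overall strategy is the paper's: the Lasota--Yorke inequality of Lemma~\ref{lema3} gives uniform variation bounds on the averages, and (B$_2$)--(B$_3$) convert these into compactness and a variation bound on limit points. For $f\in BV(R)$ your argument is complete and in fact yields the sharper bound $V(g)\le K_1\|f\|_1$ for any accumulation point $g$. The genuine gap is the opening reduction. The conclusion you want to transport --- that $A_nf=\frac1n\sum_{j=0}^{n-1}P_t^jf$ has an $L^1$-accumulation point --- does not pass through $L^1$-approximation merely because $f\mapsto A_nf$ is an $L^1$-contraction uniformly in $n$: for each approximant $f_k$ you only obtain convergence of $A_nf_k$ along a subsequence $(n_i^{(k)})_i$ depending on $k$, and these subsequences need not be compatible, so no single subsequence of $(A_nf)_n$ is produced. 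Likewise, for $f\notin BV(R)$ the functions $A_nf$ need not have finite variation, so (B$_2$) cannot be applied directly to a convergent subsequence of $(A_nf)_n$ to bound the variation of its limit; this is why the ``moreover'' clause also needs the approximation machinery rather than a direct appeal to lower semicontinuity.

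The paper closes exactly this gap, and it is where most of its proof lives: the limits $g_k$ of the averages of the approximants satisfy $\|g_k\|_1\le 2\|f\|_1$ and $V(g_k)\le 4K_1\|f\|_1$ uniformly in $k$, so (B$_3$) applies a \emph{second} time to extract $g_{k_i}\to g$ in $L^1$ with $V(g)\le 4K_1\|f\|_1$ by (B$_2$); a diagonal choice of indices $n_\ell$ then gives $A_{n_\ell}f_{k_\ell}\to g$, and the estimate $\|A_{n_\ell}f_{k_\ell}-A_{n_\ell}f\|_1\le\|f_{k_\ell}-f\|_1\to 0$ transfers this to $A_{n_\ell}f\to g$. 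The ``moreover'' part is then obtained by rerunning this argument with an arbitrary convergent subsequence of $(A_nf)_n$ playing the role of the full sequence and invoking uniqueness of $L^1$-limits. You correctly flagged the approximation step as the delicate one, but flagging it is not the same as carrying it out; as written, the proposal proves the proposition only for $f\in BV(R)$.
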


\begin{proof}
Given $f\in L^1(R)$, by property (B$_1$) we may consider a sequence of functions $(f_k)_k$ in $BV(R)$
converging to $f$ in the $L^1$-norm. With no loss of generality we may assume that $\|f_k\|_1\leq 2\|f\|_1$ for every $k\geq 1$.  It follows from Lemma~\ref{lema3} that for each $k\geq 1$ and large $j$ we have $$ V(P^j_tf_k)\leq
\lambda^jV(f_k)+K_1\|f_k\|_1\leq 3K_1\|f\|_1.$$ 
So, for large $n$  we have
$$V\left(\frac{1}{n}\sum_{j=0}^{n-1}P^j_t f_k\right)\leq
4K_1\|f\|_1.$$
Using that $\|f_k\|_1\leq 2\|f\|_1$ for every $k\geq 1$, it easily follows from (C$_1$) that
$$\left\|\frac{1}{n}\sum_{j=0}^{n-1}P^j_t f_k\right\|_1\leq
2\|f\|_1.$$
Then it follows from (B$_3$) that there exists some
$g_k\in BV(R)$ and a sequence $(n_i)_i$ such that
$1/n_i\sum_{j=0}^{n_i-1}P_t^j f_k$ converges in the
$L^1$-norm to $g_k$ as $i$ goes to $+\infty$.
Moreover, by (B$_2$) we have $V(g_k)\leq 4K_1\|f\|_1$ for every $k\ge 1$.
Hence, we may apply the same argument to the sequence
$(g_k)_k$ and obtain a subsequence $(k_i)_i$
such that $(g_{k_i})_i$ converges in the $L^1$-norm to
some $g\in BV(R)$ with $V(g)\leq 4K_1\|f\|_1$. 
Hence, there must be some
sequence $(n_\ell)_\ell$ converging to $+\infty$  for which
$1/n_\ell\sum_{j=0}^{n_\ell-1}P^j_t f_{k_\ell}$ converges to $g$
in the $L^1$-norm as $\ell\to +\infty$. On the other
hand, 
$$\left\|\frac{1}{n_\ell}\sum_{j=0}^{n_\ell-1}\big(P^j_t
f_{k_\ell}-P^j_tf\big)\right\|_1\leq\frac{1}{n_\ell}\sum_{j=0}^{n_\ell-1}
\left\|f_{k_\ell}-f\right\|_1=\left\|f_{k_\ell}-f\right\|_1$$ and
this last term goes to 0 as $\ell\rightarrow +\infty$. This
clearly gives that ${1}/{n_\ell}\sum_{j=0}^{n_\ell-1}P^j_t f$
converges to $g$ in the $L^1$-norm.

To prove the second part of the lemma, consider some subsequence of $1/n\sum_{j=0}^{n-1}P^j_t f$ converging to $f_0$ in the $L^1$-norm. Taking that subsequence playing the role of the whole sequence in the argument above we easily see that $f_0$ satisfies the conclusion by uniqueness of the limit.
\end{proof}

\begin{corollary}\label{co.bueno}
If $h_t$ is the density of an absolutely continuous $\phi_t$-invariant probability measure, then $h_t\in BV(R)$ and $V(h_t)\le 4K_1$.
\end{corollary}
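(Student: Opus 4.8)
The plan is to apply Proposition~\ref{pr.dakhc} directly, taking $f=h_t$. The key observation is that $h_t$ is a fixed point of the Perron--Frobenius operator, which collapses the Ces\`aro averages to a constant sequence, so the general accumulation-point machinery of the proposition specializes immediately to $h_t$ itself.

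First I would note that, since $h_t$ is the density of an absolutely continuous $\phi_t$-invariant probability measure, property (C$_2$) yields $P_th_t=h_t$, and therefore $P_t^jh_t=h_t$ for every $j\ge0$ by iteration. Consequently
$$\frac1n\sum_{j=0}^{n-1}P_t^jh_t=h_t\quad\text{for every }n\ge1,$$
so the sequence of Ces\`aro averages associated with $f=h_t$ is constantly equal to $h_t$; in particular $h_t$ is its (unique) accumulation point in the $L^1$-norm.

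Next I would invoke Proposition~\ref{pr.dakhc} with $f=h_t$. It asserts that every accumulation point of $\frac1n\sum_{j=0}^{n-1}P_t^jh_t$ belongs to $BV(R)$ and has variation at most $4K_1\|h_t\|_1$. Since $h_t$ itself is that accumulation point, we conclude $h_t\in BV(R)$ and $V(h_t)\le 4K_1\|h_t\|_1$.

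Finally, because $\mu$ is a probability measure and its density is nonnegative, we have $\|h_t\|_1=\int_R h_t\,dm=\mu(R)=1$, and the desired bound $V(h_t)\le 4K_1$ follows. There is essentially no obstacle here: all the analytic content lies in Proposition~\ref{pr.dakhc}, and the only point to be careful about is recognizing that the invariance $P_th_t=h_t$ trivializes the averaging, so that the general accumulation-point statement applies to $h_t$ with $\|h_t\|_1=1$.
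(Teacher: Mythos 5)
Your proposal is correct and follows essentially the same route as the paper: invoke (C$_2$) to see that the Ces\`aro averages of $h_t$ are constantly equal to $h_t$, then apply Proposition~\ref{pr.dakhc} with $\|h_t\|_1=1$. The paper's own proof is just a terser version of exactly this argument.
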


\begin{proof}
Take $h_t$ the density of an absolutely continuous $\phi_t$-invariant probability measure. We have from property (C$_2$) that $P_t^jh_t=h_t$ for all $j\ge 1$. This implies that the sequence $1/n\sum_{j=0}^{n-1}P^j_t h_t$ is constant and equal to $h_t$, and so the result follows. 
\end{proof}

Now we are in conditions to conclude the proof of Theorem~\ref{expanding}. 

Let $(t_n)_n$ be a sequence in $I$ converging to some $t_0\in I$. Assume that for each $n\ge 1$ we have an absolutely continuous $\phi_{t_n}$-invariant probability measure $\mu_n$ and consider $$h_n=\frac{d\mu_n}{dm}.$$
Using the fact that each $h_n$ is the density of a probability measure and Corollary~\ref{co.bueno} we  have for all $n\ge 1$
$$\|h_n\|_1= 1\quad\text{and}\quad V(h_n)\le 4K_1.$$ 
Hence, by (B$_2$) and (B$_3$) there exists $h_0\in BV$ with $V(h_0)\le 4K_1$ such that the sequence $(h_n)_n$ converges to $h_0$ in the $L^1$-norm. 
Let $\mu_0$ be the probability measure in $R$ whose density with respect to $m$ is $h_0$.
Theorem~\ref{expanding} is now a consequence of the following lemma.

\begin{lemma}\label{p.igual}
 $\mu_0$ is a $\phi_{t_0}$-invariant measure.
\end{lemma}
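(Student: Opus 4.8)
The plan is to verify the invariance of $\mu_0$ in its dual form: I will show that
$$\int (f\circ\phi_{t_0})\,h_0\,dm=\int f\,h_0\,dm$$
for every continuous $f:R\to\R$. Since $R$ is compact, continuous functions determine Borel measures (Riesz representation), so this identity is equivalent to $\phi_{t_0*}\mu_0=\mu_0$, i.e.\ to $\mu_0$ being $\phi_{t_0}$-invariant. The starting point is that each $\mu_n$ is $\phi_{t_n}$-invariant, which means precisely that $\int (f\circ\phi_{t_n})\,d\mu_n=\int f\,d\mu_n$ for every bounded measurable $f$; writing $d\mu_n=h_n\,dm$ gives, for every continuous $f$ and every $n\ge1$,
$$\int (f\circ\phi_{t_n})\,h_n\,dm=\int f\,h_n\,dm.$$
It then suffices to pass to the limit $n\to\infty$ on both sides.

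The right-hand side is immediate: since $f$ is bounded on the compact set $R$ and $h_n\to h_0$ in the $L^1$-norm, we have $\int f\,h_n\,dm\to\int f\,h_0\,dm$. For the left-hand side I would write
$$\int (f\circ\phi_{t_n})\,h_n\,dm-\int (f\circ\phi_{t_0})\,h_0\,dm=\int (f\circ\phi_{t_n})(h_n-h_0)\,dm+\int (f\circ\phi_{t_n}-f\circ\phi_{t_0})\,h_0\,dm.$$
The first term is bounded in absolute value by $\|f\|_\infty\,\|h_n-h_0\|_1$ and therefore tends to $0$. The substance of the argument lies in the second term.

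For the second term I would apply H\"older's inequality with the conjugate exponents $d$ and $p=d/(d-1)$, which satisfy $1/d+1/p=1$:
$$\left|\int (f\circ\phi_{t_n}-f\circ\phi_{t_0})\,h_0\,dm\right|\le \|f\circ\phi_{t_n}-f\circ\phi_{t_0}\|_d\,\|h_0\|_p.$$
Here $\|h_0\|_p$ is finite---indeed bounded by $4CK_1$---because $h_0\in BV(R)$ with $V(h_0)\le 4K_1$ (established just before the lemma) and Sobolev's inequality \eqref{bvp} gives $\|h_0\|_p\le C\,V(h_0)$. The factor $\|f\circ\phi_{t_n}-f\circ\phi_{t_0}\|_d$ tends to $0$ by hypothesis (1) of Theorem~\ref{expanding} as $t_n\to t_0$. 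This is exactly the place where the choice of the $\|\cdot\|_d$-norm in condition (1) is used, matched against the exponent $p$ produced by the bounded-variation bound on $h_0$. Consequently the second term also vanishes in the limit; the left-hand side converges to $\int (f\circ\phi_{t_0})\,h_0\,dm$, and comparison with the limit of the right-hand side yields the claimed identity.

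The only delicate point is this H\"older estimate, and with it the necessity of knowing $h_0\in L^p(R)$ rather than merely $h_0\in L^1(R)$: the increment $f\circ\phi_{t_n}-f\circ\phi_{t_0}$ is controlled only in the $L^d$-norm, so pairing it against $h_0$ requires the conjugate integrability of the density. This is precisely what the uniform variation bound together with the Sobolev embedding $BV(R)\subset L^p(R)$ supplies, and everything else reduces to routine $L^1$ limit passages.
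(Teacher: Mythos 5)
Your proof is correct and follows essentially the same route as the paper: both reduce the invariance of $\mu_0$ to a limit passage in $\int(f\circ\phi_{t_n})\,h_n\,dm=\int f\,h_n\,dm$, controlled by H\"older's inequality with exponents $d$ and $p=d/(d-1)$, the Sobolev bound $\|\cdot\|_p\le C\,V(\cdot)$ from the uniform variation estimate, and hypothesis (1). The only (immaterial) difference is that you pair the increment $f\circ\phi_{t_n}-f\circ\phi_{t_0}$ with $h_0$ and the difference $h_n-h_0$ with $f\circ\phi_{t_n}$, whereas the paper does the opposite, using the uniform bound $\|h_n\|_p\le 4CK_1$ instead of $\|h_0\|_p\le 4CK_1$.
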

 \begin{proof}
 Since the sequence $(h_n)_n$ converges to $h_0$ in the $L^1$-norm, it easily follows   that
$(\mu_n)_n$ converges to $\mu_0$ in the weak*
topology. Thus, given any $f\colon R\rightarrow\R$
continuous we have
 $$
\int fd\mu_n\longrightarrow \int
fd\mu,\quad\mbox{when } n\rightarrow\infty.
 $$ On
the other hand, since $\mu_n$ is $\phi_{t_n}$-invariant
we have
 $$
 \int fd\mu_n=\int
(f\circ\phi_{t_n})d\mu_n,\quad\mbox{for every }n.
 $$
It is enough to prove that
 \begin{equation*}
 \int (f\circ\phi_{t_n})d\mu_n
 \longrightarrow \int (f\circ\phi_{t_0}) d\mu,
 \quad\mbox{when } n\rightarrow\infty.
 \end{equation*}
 We have
 \begin{eqnarray*}
 \lefteqn{
\left|\int (f\circ\phi_{t_n})d\mu_n
 -\int (f\circ\phi_{t_0})
    d\mu\right|  }\\
& &\hspace{1cm}
\le  \left|\int (f\circ\phi_{t_n})d\mu_n
 - \int (f\circ\phi_{t_0}) d\mu_n\right|
+\left|\int (f\circ\phi_{t_0}) d\mu_n
 - \int (f\circ\phi_{t_0}) d\mu\right|\\
 & &\hspace{1cm}
\le  \int \left|f\circ\phi_{t_n}-f\circ\phi_{t_0}\right| d\mu_n
+\left|\int (f\circ\phi_{t_0}) d\mu_n
 - \int (f\circ\phi_{t_0}) d\mu\right|\\
  & &\hspace{1cm}
= \int \left|f\circ\phi_{t_n}-f\circ\phi_{t_0}\right| h_n\,dm
+\left|\int (f\circ\phi_{t_0}) (h_n-h_0) \,dm\right|.
 \end{eqnarray*}
 Now using \eqref{bvp} we easily get that each $h_n\in L^p(R)$ with $p=d/(d-1)$ and
 $$\|h_n\|_p\le CV(h_n)\le 4CK_1.$$
 Observing  that $1/p+1/d=1$, then by H\"older's Inequality we get
  $$\int \left|f\circ\phi_{t_n}-f\circ\phi_{t_0}\right| h_n\,dm\le \|f\circ \phi_{t_n}-f\circ\phi_{t_0}\|_d\cdot\|h_n\|_p\le 4C_dK_1 \|f\circ \phi_{t_n}-f\circ\phi_{t_0}\|_d,$$
  and this clearly converges to zero, when $n\to+\infty$, by assumption (1) in the statement of Theorem~\ref{expanding}. On the other hand, as $f$ is bounded we have
   $$\left|\int (f\circ\phi_{t_0}) (h_n-h_0) \,dm\right|\le \|f\circ\phi_{t_0}\|_\infty\cdot\|h_n-h_0\|_1$$
 and this clearly converges to 0 when $n\to+\infty$ as well. 
 \end{proof}

 \section{Two-dimensional tent maps} 
 
 In this section we shall prove Theorem~\ref{main}. The idea is to obtain it as  a corollary of Theorem~\ref{expanding}. As observed before, each $\Lambda_t$ is \emph{strongly transitive}: any open set becomes the whole space under a finite number of iterations by $\Lambda_t$.  This implies that the  absolutely continuous $\Lambda_t$-invariant ergodic probability measure~$\mu_t$ must be unique. Moreover, any power of $\Lambda_t$ has a unique absolutely continuous invariant ergodic probability measure as well, which must  necessarily coincide with~$\mu_t$. Thus, it is enough to obtain the statistical stability for some power of the maps in our family.  
 
 We are going to see that the family $(\Lambda_t^3)_{t\in[\tau,1]}$ is in the conditions of Theorem~\ref{expanding}. Namely, each $\Lambda_t^3:T\to T$ is a $C^2$ piecewise expanding map with bounded distortion and long branches with constants $\sigma_t, D_t,\beta_t,\rho_t$  satisfying (P$_1$), (P$_2$) and (P$_3$), and
  \begin{equation}\label{eq.dt}
\sigma_t\left(1+\frac1{\beta_t}\right)\le \lambda\quad\text{and}\quad D_t+\frac{1}{\beta_t\rho_t}+\frac{D_t}{\beta_t}  \le K,
\end{equation}
  for some choice of uniform constants $0<\lambda<1$ and $K>0$; observe that as the maps $\Lambda_t^3$ are continuous then the first condition  in Theorem~\ref{expanding} is trivially satisfied.
  
  From the definition of $\Lambda_t$  in \eqref{algo} and \eqref{familyt} we obviously have that $T_0$ and $T_1$ are the domains of smoothness of $\Lambda_t$.  The map $\Lambda_t$ is piecewise linear with 
   \[
   D\Lambda_t(x)=
\left(
\begin{array}{cc}
   t  & t  \\
   t  &  -t
\end{array}
\right)
\]
for $x\in T_0\setminus\mathcal C$, and 
  \[
   D\Lambda_t(x)=
\left(
\begin{array}{cc}
  - t  & t  \\
  - t  &  -t
\end{array}
\right)
\]
for $x\in T_1\setminus\mathcal C$. From here we deduce that for all $x\in T\setminus\mathcal C$ we have
 \begin{equation}\label{caza}
\|D\Lambda_t^{-1}(x)\|\le \frac{1}{2t}.
\end{equation}
Now take $R=T$ and $\{R_i^t\}_{i=1}^{8}$ the (Lebesgue mod 0) partition of $R$ given by the domains of smoothness of $\Lambda_t^3$. 
\begin{figure}[!ht]
\begin{minipage}{1 \linewidth}
\centering
\subfigure[]{
\centering
\begin{tikzpicture}[xscale=0.28,yscale=0.28]
\draw [darkgray,thick] (0.014142151,-4.4041424) -- (8.814142,4.395858) -- (17.614141,-4.4041424)-- (0.014142151,-4.4041424);
\draw [darkgray,thick] (8.814142,4.395858) -- (8.814142,-4.4041424);
\end{tikzpicture}
}
\hspace{-3mm}
\subfigure[]{
\centering
\begin{tikzpicture}[xscale=0.28,yscale=0.28]
\draw [darkgray,thick] (0.014142151,-4.4041424) -- (8.814142,4.395858) -- (17.614141,-4.4041424)-- (0.014142151,-4.4041424);
\draw [darkgray,thick] (8.814142,4.395858) -- (8.814142,-4.4041424);
\draw [darkgray,thick] (5.034142,0.59585786) -- (8.794142,-3.0841422) -- (12.694142,0.5158579);
\end{tikzpicture}
}
\hspace{-3mm}
\subfigure[]{
\tiny{
\centering
\begin{tikzpicture}[xscale=0.28,yscale=0.28]
\draw [darkgray,thick] (0.014142151,-4.4041424) -- (8.814142,4.395858) -- (17.614141,-4.4041424)-- (0.014142151,-4.4041424);
\draw [darkgray,thick] (8.814142,4.395858) -- (8.814142,-4.4041424);
\draw [darkgray,thick] (5.034142,0.59585786) -- (8.794142,-3.0841422) -- (12.694142,0.5158579);
\draw [darkgray,thick] (6.014142,1.5958579) -- (11.614142,1.5958579);
\draw [darkgray,thick] (4.014142,-0.40414214) -- (4.014142,-4.4041424);
\draw [darkgray,thick] (13.614142,-0.40414214) -- (13.614142,-4.4041424);

\end{tikzpicture}
}
}
\caption{\em Smoothness domains: (a)  for $ \Lambda_t $\quad (b)   for $ \Lambda_t^2 $\quad (c)   for $ \Lambda_t^3 $}
\label{amigao}
\end{minipage}
\end{figure}
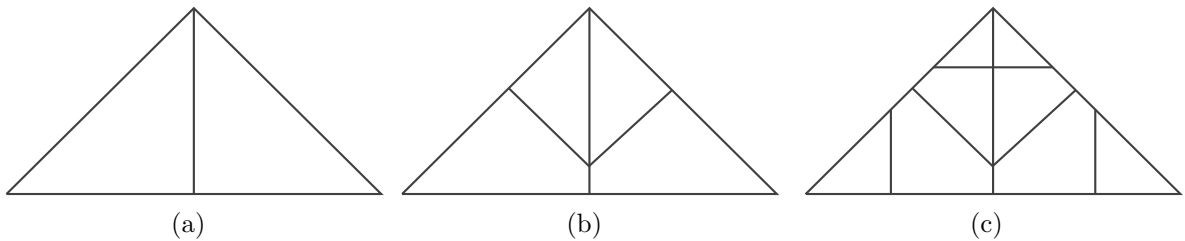
From \eqref{caza} we easily deduce that 
  \begin{equation*}\label{caza2}
\|(D\Lambda_t^3)^{-1}(x)\|\le \frac{1}{8t^3}:=\sigma_t<1
\end{equation*}
 and so property (P$_1$) holds for each $t\in[\tau,1]$; recall that $\tau\approx 0.88$. Since $\Lambda_t^3$ is linear on each domain of smoothness, then it has 0 distortion. Thus we  obtain property (P$_2$) with $D_t=0$, for each $t\in[\tau,1]$.

Let us now check (P$_3$). As  each $\Lambda_t^3$ is linear on each $R_i^t$ and preserves angles, it is enough to obtain the geometric property (P$_3$) for the domains $R_i^t$'s instead of their images.
 Since the pre-image of the critical set $\mathcal C$ delimits the boundary of the domains of smoothness, it easily follows that the boundary of each $R_i^t$ is formed by at most five straight line segments with slope $-1$, 0, 1 or $\infty$ meeting at an angle at least~$\pi/4$; see Figure~\ref{amigao}(c). 
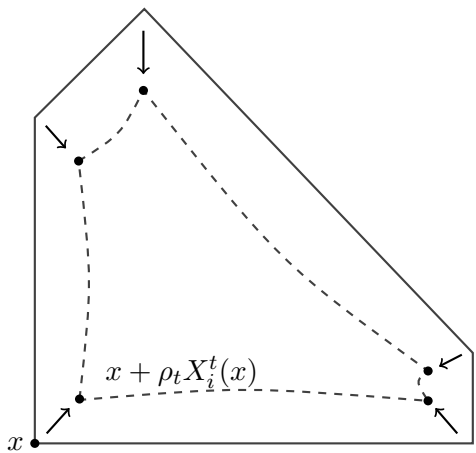
\begin{figure}[!ht]
\centering
\begin{minipage}{0.9 \linewidth}
\centering
{\small
\begin{tikzpicture}[xscale=0.6,yscale=0.6]
\draw [darkgray,thick] (0.02,-4.80434) -- (0.02,2.39566) -- (2.42,4.79566) -- (9.62,-2.8043401) -- (9.62,-4.80434) -- (0.02,-4.80434);
\draw [darkgray,thick,dashed] (1.0,-3.82434) .. controls (1.28,-1.2443401) .. (0.98,1.4356599) .. controls (1.9,2.01566) .. (2.4,2.9956598) .. controls (5.4,-0.82434005) .. (8.64,-3.20434) .. controls (8.36,-3.42434) .. (8.64,-3.86434) .. controls (4.6,-3.5443401) .. (1.0,-3.84434);

\fill[black] (0.02,-4.80434) circle (1mm); 
\fill[black] (1.0,-3.82434) circle (1mm); 
\fill[black] (0.98,1.4356599) circle (1mm); 
\fill[black] (2.4,2.9956598) circle (1mm); 
\fill[black] (8.64,-3.20434) circle (1mm); 
\fill[black] (8.64,-3.86434) circle (1mm); 
\node[black] at (2.6,-3.22434) {$\quad\quad x + \rho_t X_i^t(x) $};
\node[black] at (-0.4,-4.80434) {$ x $};

\draw[black,thick,->] (0.28,-4.58434) -- (0.78,-4.02434);
\draw[black,thick,->] (0.26,2.2156599) -- (0.7,1.71566);
\draw[black,thick,->] (2.4,4.31566) -- (2.4,3.35566);
\draw[black,thick,->] (9.38,-2.84434) -- (8.88,-3.10434);
\draw[black,thick,->] (9.28,-4.58434) -- (8.8,-4.02434);

\end{tikzpicture}
}
\caption{\em A long  branch for $ \Lambda_t^3 $}
\label{dois}
\end{minipage}
\end{figure}
Then, it is not hard to check that for every $t\in[\tau,1]$ and $i=1,\dots, 8$ there is a piecewise $C^1$ unitary vector filed $X_i^t$ in $\partial R_i^t$ such that
 $$|\sin\angle(v,X_i^t(x))|\geq
\sin\frac\pi8:=\beta_t$$
for every $x\in \partial R_i^t$ and  $v\in T_x\partial R_i^t\setminus\{0\}$; see Figure~\ref{dois}.
To prove the existence of $\rho_t$ is is enough to observe that the domains of smoothness of $\Lambda_t^3$ depend continuously on the parameter $t$ as illustrated in Figure~\ref{tres}, and so it is possible to choose an uniform value of $\rho$ such that~(P$_3$) holds for each $t\in[\tau,1]$.


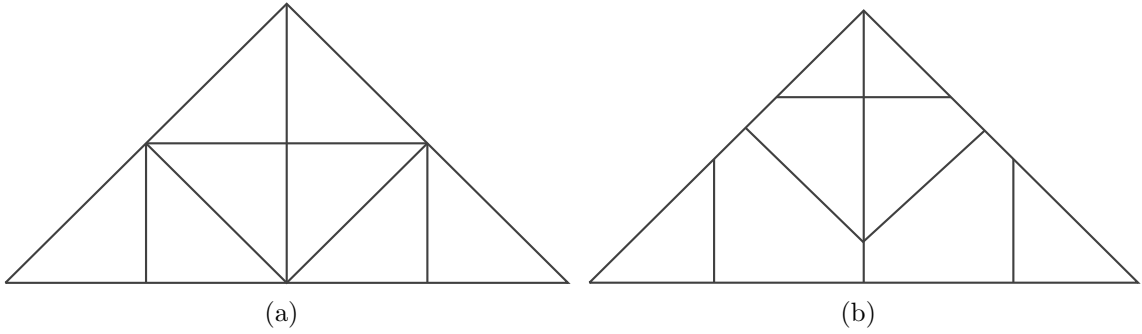
\begin{figure}[!ht]
\begin{minipage}{1 \linewidth}
\centering
\subfigure[]{
\centering
\begin{tikzpicture}[xscale=3.7,yscale=3.7]
\draw [darkgray,thick] (0,0) -- (2,0) -- (1,1) -- (0,0);
\draw [darkgray,thick] (1,0) -- (1,1) ;
\draw [darkgray,thick] (0.5,0.5) -- (1,0) -- (1.5,0.5) ;
\draw [darkgray,thick] (0.5,0.5) -- (1.5,0.5) ;
\draw [darkgray,thick] (0.5,0.5) -- (0.5,0) ;
\draw [darkgray,thick] (1.5,0.5) -- (1.5,0) ;
\end{tikzpicture}
}
\hspace{-3mm}
\subfigure[]{
\centering
\begin{tikzpicture}[xscale=0.41,yscale=0.41]
\draw [darkgray,thick] (0.014142151,-4.4041424) -- (8.814142,4.395858) -- (17.614141,-4.4041424)-- (0.014142151,-4.4041424);
\draw [darkgray,thick] (8.814142,4.395858) -- (8.814142,-4.4041424);
\draw [darkgray,thick] (5.034142,0.59585786) -- (8.794142,-3.0841422) -- (12.694142,0.5158579);
\draw [darkgray,thick] (6.014142,1.5958579) -- (11.614142,1.5958579);
\draw [darkgray,thick] (4.014142,-0.40414214) -- (4.014142,-4.4041424);
\draw [darkgray,thick] (13.614142,-0.40414214) -- (13.614142,-4.4041424);
\end{tikzpicture}
}
\caption{\em Domains of smoothness: (a) for $ \Lambda_1^3 $\quad (b) for  $ \Lambda_\tau^3$}
\label{tres}
\end{minipage}
\end{figure}

Altogether this shows that there are $0<\lambda<1$ and $K>0$ such that   
  $$\sigma_t\left(1+\frac1{\beta_t}\right)\le \frac1{8t^3}\left(1+\frac1{\sin(\pi/8)}\right)\le\lambda$$ 
for every $t\in[\tau,1]$ (recall that $\tau\approx 0.88$)  and 
    $$ D_t+\frac{1}{\beta_t\rho_t}+\frac{D_t}{\beta_t} =\frac1{\rho\sin(\pi/8)} := K,$$
    thus having proved \eqref{eq.dt} and hence Theorem~\ref{main}.


%


\end{document}